\numberwithin{equation}{section}
\def\qed{{\hbadness=10000\hfill\ \vbox{\hrule height.09ex
   \hbox{\vrule width.09ex height1.55ex depth.2ex \kern1.8ex
   \vrule width.09ex height1.55ex depth.2ex}\hrule height.09ex}\break
   \bigskip}}
\newtheorem{theorem}{Theorem}[section]
\newtheorem{corollary}{Corollary}[section]
\theoremstyle{definition}
\theoremstyle{remark}
\begin{document}

\linespread{1}\title{\textbf{Killing Correspondence in Finsler spaces}}

\author{\\\textbf{Suresh K. Shukla and M. K. Gupta}\\
\normalsize{Department of Pure and Applied Mathematics}\\ \normalsize{Guru Ghasidas Vishwavidyalaya}\\ \normalsize{Bilaspur (C.G.)-495009, INDIA}\\
\normalsize{Email: shuklasureshk@gmail.com; mkgiaps@gmail.com}}
\date{}
\maketitle

\linespread{1.3}\begin{abstract} The present paper deals with the Killing correspondence between some Finsler spaces. We consider a Finsler space equipped with a $\beta$-change of metric and study the Killing correspondence between the original Finsler space and the Finsler space equipped with $\beta$-change of metric. We obtain necessary and sufficient condition for a vector field Killing in the original Finsler space to be Killing in the Finsler space equipped with $\beta$-change of metric. Certain consequences of such result are also discussed.

\textbf{Keywords:} Finsler spaces, $\beta$-change, Killing vector field \

\textbf{2010 Mathematics Subject Classification:} 53B40.
\end{abstract}

\section{Introduction}
~~~~As a matter of investigation, it is important to observe how properties of a Finsler space change under a change in the metric. Several geometers from different parts of the globe have  been working in this direction for the last 2-3 decades. M.S. Knebelman \cite{Kn}, S.Golab \cite{Go1} and M.Hashiguchi \cite{Ha} studied conformal change of Finsler metrics. Park and Lee \cite{Le} discussed various Randers changes of Finsler spaces with $(\alpha,\beta)$-metrics of Douglas type. M. Matsumoto \cite{Ma1} and T. Aikou \cite{Ai} studied and investigated several properties of projective change and projective Randers change. In 1984, C. Shibata \cite{Shi} studied $\beta$-change of Finsler metrics and discussed certain invariant tensors under such a change.

Killing equations play important role in the study of a Finsler space which undergoes a change in the metric. In fact, they give an equivalent characterization for the transformations to preserve distances. In 1979, Singh, \emph{et. al.} \cite{Si} studied a Randers space $F^n \left(M, L(x,y)=\left(g_{i\,j} (x)\,y^i\,y^j \right)^{\frac{1}{2}}+b_i (x)\,y^i \right)$,\;$n\geq 2$ which undergoes a change $L(x,y) \mapsto L^{*} (x,y)=L^2 (x,y) + \left(\alpha_i (x) y^i\right)^2$. They discussed Killing correspondence of the spaces $F^n (M,L)$ and $F^{\,*\,n} (M,L^*)$.

In the present paper, we consider a general Finsler space $F^n (M,L)$ which undergoes a $\beta$-change, that is $L(x,y) \mapsto \bar{L} (x,y)=f(L,\,\beta)$, where $\beta (x,y)=b_i (x) y^i$ is a 1-form. We study Killing correspondence of the Finsler spaces $F^n (M,L)$ and $\bar{F}^{\,n} (M,\bar{L})$. For the notations and terminology, we refer the reader to the books \cite{Pl01} and \cite{Ru}, and the paper \cite{Shi} by Shibata.

The paper is organized as follows. In section 2, we give some preliminaries which are used in the discussion of subsequent sections. Section 3 deals with Killing correspondence of $F^n (M,L)$ and $\bar{F}^{\,n} (M,\bar{L})$, where $\bar{L} (x,y)=f(L,\,\beta)$. In section 4, we give conclusion to the results obtained in the paper and discuss future possible work to be done in this direction.

\section{Preliminaries} 

~~~~Let $F^n (M,L)$, $n \geq 2$ be an $n$-dimensional  Finsler space. Suppose that the metric function $L(x,y)$ undergoes a change $L(x,y) \mapsto \bar{L} (x,y)=f(L,\,\beta)$, where $\beta (x,y)=b_i (x) y^i$ is a 1-form and the new space is $\bar{F}^n (M,\bar{L})$. This change of metric is called a $\beta$-change (\emph{see} \cite{Shi} and \cite{Pl01}). 

The angular metric tensor $\bar{h}_{ij}$ of the space $\bar{F}^{\,n}$ is given by \cite{Shi} 
\begin{equation}\label{a}
\bar{h}_{ij}=p h_{ij}+q_0 m_i m_j,
\end{equation}
where
\begin{equation}\left\{ \begin{split}
& p=f\,f_1/L,\;\;q_0=f\,f_{22},\;\;m_i=b_i-\beta y^i/L^2,\\ 
& f_1=\partial f/\partial L,\;\;f_2=\partial f/\partial \beta,\\ 
\end{split}\right. \end{equation}
$h_{ij}$ being the angular metric tensor of $F^n$.
The fundamental metric tensor $\bar{g}_{ij}$ and its inverse $\bar{g}^{ij}$ of $\bar{F}^{\,n}$ are expressed as \cite{Shi}
\begin{equation}
\bar{g}_{ij}=p g_{ij}+p_0 b_i b_j+ p_{-1} (b_i y_j+b_j y_i)+p_{-2} y_i y_j,
\end{equation}
\begin{equation}
\bar{g}^{ij}=g^{ij}/p -s b^i b^j-s_{-1} (b^i y^j+b^j y^i)-s_{-2} y^i y^j,
\end{equation}
where
\begin{equation}\left\{ \begin{split}
& p_0=q_0+f_2^2,\\ 
& q_{-1}=f\,f_{12}/L,\;\;p_{-1}=q_{-1}+p\,f_2/f,\;\;q_{-2}=f\left(f_{11}-f_1/L\right)/L^2,\\
& p_{-2}=q_{-2}+p^2/f^2,\\ 
& b^i=g^{ij} b_j,\;\;b^2=g^{ij} b_i b_j,\;\;s_0=\bar{L} q_0/(\tau p L^2),\\
& s_{1}=p_{-1} \bar{L}^2/(\tau p L^2),\;\;s_{-2}=p_{-1} (\nu p L^2-b^2 \bar{L}^2)/(\tau p L^2 \beta),\\
& \tau=\bar{L}^2 (p+\nu q_0)/L^2,\;\;\nu=b^2-\beta^2/L^2,
\end{split}\right. \end{equation}
$g_{ij}$ and $g^{ij}$ respectively being the metric tensor and inverse metric tensor of $F^n$. The Cartan tensor $\bar{C}_{ijk}$ and the associate Cartan tensor $\bar{C}^h_{ij}$ of $\bar{F}^{\,n}$ are given by the following expressions:
\begin{equation}\label{b}
\bar{C}_{ijk}=p\,C_{ijk}+\frac{1}{2} p_{-1} \mathfrak{S}_{(ijk)} \{h_{ij}\,m_k\}+\frac{1}{2} p_{02}\,m_i m_j m_k, 
\end{equation}
 \begin{equation}\label{c}
\bar{C}^h_{ij}=C^h_{ij}-V^h_{ij},
\end{equation}
where
\begin{equation}\begin{split}
V^h_{ij}=&Q^h (p C_{imj} b^m-p_{-1} m_i m_j)-(\frac{1}{p}m^h-\nu Q^h) (p_{02} m_i m_j+p_{-2} h_{ij})/2 \,\\
&-p_{-1} (h^h_i m_j + h^h_j m_i)/(2p),
\end{split}
\end{equation}

\begin{equation}
Q^h=s_0 b^h +s_{-1} y^h,\;\;h^h_i=g^{hr} h_{ir},\;\;m^h=g^{hr} m_r,\;\;p_{02}=\partial p_0/\partial \beta,
\end{equation}             
$\mathfrak{S}_{(ijk)}$ denote the cyclic sum with respect to the indices \textit{i}, \textit{j} and \textit{k}; $C_{ijk}$ and $C^h_{ij}$ respectively being the Cartan tensor and associate Cartan tensor of $F^n$. 

The spray coefficients $\bar{G}^i$ of $\bar{F}^{\,n}$ in terms of the spray coefficients $G^i$ of $F^n$ are expressed as \cite{Shi}
\begin{equation}\label{d}
\bar{G}^i =G^i + D^i,
\end{equation}  
where
\begin{equation*}
\begin{split}
& D^i=(q/p)\,F^i_0+(p\,E_{00}-2q F_{r0} b^r) (s_{-1}\,y^i+s_0 b^i)/2,\\
& F^i_j=g^{ir} F_{rj},\;\;E_{jk}=(1/2) (b_{j|k}+b_{k|j}),\;\;F_{jk}=(1/2) (b_{j|k}-b_{k|j}),
\end{split}
\end{equation*}
the symbol '$_|$' denote the $h$-covariant derivative with respect to the Cartan connection $C\Gamma$ and the lower index '$_0$' (except in $s_0$) denote the contraction by $y^i$.

The relation between the coefficients $\bar{N}^i_j$ of Cartan nonlinear connection in $\bar{F}^{\,n}$ and the coefficients $N^i_j$ of the corresponding Cartan nonlinear connection in $F^n$ is given by \cite{Shi}
\begin{equation}\label{e}
\bar{N}^i_j=N^i_j + D^i_j,
\end{equation} 
where
\begin{equation}
D^i_j=\dot\partial_j D^i,\;\;\dot\partial_j \equiv \partial/\partial y^j.
\end{equation} 
The coefficients $\bar{F}^i_{jk}$ of Cartan connection $C\bar{\Gamma}$ in $\bar{F}^{\,n}$ and the coefficients $F^i_{jk}$ of the corresponding Cartan connection $C\Gamma$ in $F^n$ are related as \cite{Shi}
\begin{equation}\label{f}
\bar{F}^i_{jk}=F^i_{jk}+D^i_{jk},
\end{equation}
where
\begin{equation*}
\begin{split}
D^i_{jk}=& \{(1/p)g^{is}-Q^i b^s-y^s (s_{-1} b^i+s_{-2} y^i)\}\,\\
&(B_{sj} b_{0|k}+B_{sk} b_{0|j}-B_{kj} b_{0|s}+F_{sj} Q_k+F_{sk} Q_j+E_{kj} Q_s+p C_{jkr}D^r_s\,\\
&+V_{jkr}D^r_s-p C_{skm}D^m_j-V_{sjm}D^m_k-p C_{sjm}D^m_k-V_{skm}D^m_j);
\end{split}
\end{equation*}

\begin{equation*}
B_{kj}=2 \dot\partial_j Q_k.
\end{equation*}

The difference tensor $D^i_{jk}$ satisfies the following properties:
\begin{equation*}
(i)\;\;\;D^i_{j0}=B^i_{j0}=D^i_j,\;\;\;(ii)\;\;\;D^i_{00}=2D^i,\;\;\text{where}\;\;B^i_{jk}=\dot\partial_k D^i_j.
\end{equation*}

\section{Killing Correspondence of $F^n$ and $\bar{F}^n$}
~~~Let us consider an infinitesimal transformation 
\begin{equation}\label{g}
^\prime{x}^i=x^i+\epsilon v^i(x),
\end{equation}
where $\epsilon$ is an infinitesimal constant and $v^i(x)$ is a contravariant vector field.

The vector field $v^i(x)$ is said to be a Killing vector field in $F^n$ if the metric tensor of the Finsler space with respect to the infinitesimal transformation (\ref{g}) is Lie invariant, that is 
\begin{equation}\label{t}
\mathfrak{L}_v g_{ij}=0,
\end{equation}
 $\mathfrak{L}_v$ being the operator of Lie differentiation. Equivalently, the vector field $v^i(x)$ is Killing in $F^n$ if 
\begin{equation}\label{h}
v_{i|j}+v_{j|i}+2 C^h_{ij} v_{h|0}=0,
\end{equation}
where $v_i=g_{il} v^l$.

Now, we prove the following result which gives a necessary and sufficient condition for a Killing vector field in $F^n$ to be Killing in $\bar{F}^n$:
\begin{theorem}\label{A}
A Killing vector field $v^i(x)$ in $F^n$ is Killing in $\bar{F}^n$ if and only if
\begin{equation}\label{i}
V^h_{ij}\,v_{h|0}+C_{r\,j\,l}\,v^l\,D^r_i+C_{r\,i\,l}\,v^l\,D^r_j+v_r\,D^r_{ij}+\bar{C}^h_{ij}\,\,\left(2C_{r\,h\,l}\,v^l\,D^r+v_r\,D^r_h\right)=0,
\end{equation} 
where $\bar{C}^h_{ij}$ is the associate Cartan tensor of $\bar{F}^n$.
\end{theorem}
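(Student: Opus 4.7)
The plan is to take as the defining condition for $v^i(x)$ to be Killing in $\bar F^n$ the natural analogue of (\ref{h}),
\[
\bar v_{i||j}+\bar v_{j||i}+2\,\bar C^h_{ij}\,\bar v_{h||0}=0,\qquad \bar v_i:=\bar g_{il}\,v^l,
\]
where "$||$" denotes the $h$-covariant derivative with respect to $C\bar\Gamma$. I would rewrite every piece of this identity in terms of $F^n$ data by means of the translation formulas collected in Section 2 (in particular (\ref{c}), (\ref{d}), (\ref{e}), (\ref{f})) and then eliminate the "Killing block" using the hypothesis that (\ref{h}) already holds in $F^n$.

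First I would translate the covariant derivatives. Since $v^l=v^l(x)$ has no $y$-dependence, (\ref{f}) gives immediately
\[
v^l_{||j}=v^l_{|j}+D^l_{jk}\,v^k,\qquad v^l_{||0}=v^l_{|0}+D^l_k\,v^k,
\]
the second identity by property (i), $D^l_{j0}=D^l_j$. Metric compatibility of $\bar g$ under $C\bar\Gamma$ then yields $\bar v_{i||j}=\bar g_{il}\,v^l_{||j}$ and $\bar C^h_{ij}\,\bar v_{h||0}=\bar C_{lij}\,(v^l_{|0}+D^l_k v^k)$, turning the $\bar F^n$-Killing identity into a finite sum of terms in $\bar g_{il}v^l_{|\cdot}$, $\bar g_{il}D^l_{jk}v^k$, $\bar C_{lij}v^l_{|0}$ and $\bar C_{lij}D^l_k v^k$.

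Next I would substitute the expansion of $\bar g_{ij}$ into a $p\,g_{ij}$-piece plus $b$-and-$y$ corrections (displayed just above (\ref{c})) and split the Cartan factor via (\ref{c}) as $\bar C^h_{ij}=C^h_{ij}-V^h_{ij}$. The $C^h_{ij}$-contribution, paired with the $p\,g_{il}v^l_{|j}+p\,g_{jl}v^l_{|i}$ portion of the expansion and with the $2\,p\,C^h_{ij}g_{hl}v^l_{|0}$ piece of the last term, reproduces $p$ times the $F^n$-Killing combination $v_{i|j}+v_{j|i}+2C^h_{ij}v_{h|0}$, which vanishes by (\ref{h}). The surviving pieces — coming from the $b$-$y$ tails of $\bar g_{il}$, from the $V^h_{ij}\,\bar v_{h||0}$ contribution, and from the $D$-corrections to the covariant derivatives — are then reorganised using $D^l_{jk}=D^l_{kj}$, property (ii) $D^l_{00}=2D^l$, the explicit expression for $V^h_{ij}$, and the definitions of $m_i$ and $Q^h$ from Section 2, until the identity collapses to (\ref{i}). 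The converse follows because every step of the derivation is an equivalence.

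The hard part will be precisely this reorganisation. Expanding $\bar g_{il}v^l_{|j}$ produces a substantial residue of terms proportional to $b_i$, $y_i$, $m_i$ and to the scalar coefficients $p_0,p_{-1},p_{-2}$, and checking that all such residues combine correctly with the pieces coming from $V^h_{ij}$ and from the various $D$-tensors requires careful use of the $i\leftrightarrow j$ symmetry of the Killing equation (and of $\bar C^h_{ij}$), together with the auxiliary scalar relations among $p,p_0,q_0,q_{-1},q_{-2},s_0,s_{-1},s_{-2}$ collected in Section 2.
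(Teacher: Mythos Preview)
Your route diverges from the paper's at the very first step. The paper does \emph{not} lower the index with $\bar g$: it keeps $v_i=g_{il}v^l$ throughout and takes the $\bar F^n$-Killing condition in the form $v_{i||j}+v_{j||i}+2\bar C^h_{ij}v_{h||0}=0$ with this same $g$-lowered covector. Because then $\dot\partial_r v_i=2C_{ril}v^l$, the translation formulas (\ref{e}) and (\ref{f}) give in one line
\[
v_{i||j}=v_{i|j}-2\,C_{ril}\,v^l D^r_j-v_r D^r_{ij},
\]
and after symmetrising in $(i,j)$, adding $2\bar C^h_{ij}v_{h||0}$, and splitting $\bar C^h_{ij}=C^h_{ij}-V^h_{ij}$ via (\ref{c}), the $F^n$-Killing block (\ref{h}) drops out and what remains is exactly $-2$ times (\ref{i}). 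No expansion of $\bar g_{il}$ is performed, no $b_i,y_i,m_i$ residues appear, and none of the scalar identities among $p,p_0,q_0,\dots$ are invoked; the ``hard part'' you anticipate in your last paragraph simply never arises.

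Your plan, by lowering with $\bar g$, is forced to carry the $p_0,p_{-1},p_{-2}$ tails of $\bar g_{il}$ through the entire computation and then argue that they cancel. That is a substantially heavier calculation, and since every object in (\ref{i}) --- $v_{h|0}$, $v_r$, $C_{rjl}$ --- has its index lowered by $g$, you would in effect have to peel the $\bar g$-factor back off at the end, undoing your opening move. If the goal is to land on (\ref{i}) as written, the decisive simplification you are missing is to work with $v_i=g_{il}v^l$ from the outset rather than with $\bar v_i=\bar g_{il}v^l$.
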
 
\begin{proof}
Assume that $v^i(x)$ is Killing in $F^n$. Then (\ref{h}) is satisfied. By definition, the $h$-covariant derivatives of $v_i$ with respect to $C\bar{\Gamma}$ and $C\Gamma$ are respectively given as
\begin{equation}\label{j}
(a)\;\;v_{i||j}=\partial_j v_i-(\dot\partial_r v_i) \bar{G}^r_j-v_r \bar{F}^r_{ij},\;\;\;(b)\;\;\;\;v_{i|j}=\partial_j v_i-(\dot\partial_r v_i) G^r_j-v_r F^r_{ij},
\end{equation}
where $\partial_j \equiv \partial/\partial x^j$ and '$_{||}$' denote the $h$-covariant differentiation with respect to $C\bar{\Gamma}$. Equation (\ref{j})(a), by virtue of (\ref{d}), (\ref{f}) and (\ref{j})(b), takes the form
\begin{equation}\label{k}
v_{i||j}=v_{i|j}-2\,C_{r\,i\,l} v^l D^r_j-v_r D^r_{ij}.
\end{equation}  
Now, from (\ref{k}), we have 
\begin{equation}\label{l}
\begin{split}
v_{i||j}+v_{j||i}+2 \bar{C}^h_{ij} v_{h||0}=& v_{i|j}+v_{j|i}+2 \bar{C}^h_{ij} v_{h|0}-2\,C_{r\,i\,l} v^l D^r_j-2\,C_{r\,j\,l} v^l D^r_i\,\\
& -2 v_r D^r_{ij}-2\,\bar{C}^h_{ij} (2 C_{rhl} v^l D^r+v_r D^r_h).
\end{split}
\end{equation}
Using (\ref{c}) in (\ref{l}) and applying (\ref{h}), we get
\begin{equation}\label{m}
\begin{split}
v_{i||j}+v_{j||i}+2 \bar{C}^h_{ij} v_{h||0}=& -2 V^h_{ij} v_{h|0}-2\,C_{r\,i\,l} v^l D^r_j-2\,C_{r\,j\,l} v^l D^r_i\,\\
& -2 v_r D^r_{ij}-2\,\bar{C}^h_{ij} (2 C_{rhl} v^l D^r+v_r D^r_h).
\end{split}
\end{equation}
Proof completes with the observation that $v^i(x)$ is Killing in $\bar{F}^n$ if and only if $v_{i||j}+v_{j||i}+2 \bar{C}^h_{ij} v_{h||0}=0$, that is, if and only if (\ref{i}) holds.
\end{proof}

If a vector field $v^i(x)$ is Killing in $F^n$ and $\bar{F}^n$, then from Theorem \ref{A}, (\ref{i}) holds, which on transvection by $y^i$ yields 
\begin{equation}\label{n}
2 C_{rjl} v^l D^r+v_r D^r_j =0.
\end{equation}
Equation (\ref{i}), in view of (\ref{n}), enables us to state the following:
\begin{corollary}\label{B}
If a vector field $v^i(x)$ is Killing in $F^n$ and $\bar{F}^n$, then 
\begin{equation}\label{o}
V^h_{ij}\,v_{h|0}+C_{r\,j\,l}\,v^l\,D^r_i+C_{r\,i\,l}\,v^l\,D^r_j+v_r\,D^r_{ij}=0.
\end{equation}
\end{corollary}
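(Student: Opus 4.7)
The plan is simple: extract a hidden identity from equation (\ref{i}) by contracting with $y^i$, then feed it back into (\ref{i}). Since $v^i(x)$ is Killing in both $F^n$ and $\bar{F}^n$, Theorem \ref{A} applies and hands us (\ref{i}) as the starting point, so the whole task reduces to exploiting the structure of that relation.

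The first step is to transvect (\ref{i}) with $y^i$. My expectation is that every term except the two appearing in (\ref{n}) collapses. The two cheap cancellations are $C_{r\,i\,l}v^l D^r_j$ (killed by $C_{ril}y^i=0$) and the full $\bar{C}^h_{ij}(\cdots)$ contribution (killed by $\bar{C}^h_{ij}y^i=0$). For the $V^h_{ij}v_{h|0}$ term I would inspect the formula defining $V^h_{ij}$ term by term: each summand carries one of $m_i$, $h_{ij}$, $h^h_i$ or $C_{imj}$ in the $i$-slot, and each of these annihilates upon contraction with $y^i$ (using $\beta = b_i y^i$, $h_{ij}y^i=0$, $g^{hr}h_{ir}y^i = 0$ and $C_{imj}y^i = 0$). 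The two pieces that survive must then be evaluated via homogeneity: Euler's identity applied to the spray difference $D^r$, which is positively homogeneous of degree two in $y$, gives $y^i D^r_i = 2 D^r$, while the stated property $D^r_{i0} = D^r_i$ together with the symmetry of the Cartan connection difference in its lower indices gives $y^i D^r_{ij} = D^r_{j0} = D^r_j$. Assembling these evaluations reproduces precisely equation (\ref{n}).

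The final step is purely algebraic: relabeling the free index $j \mapsto h$ in (\ref{n}) yields $2C_{r\,h\,l}v^l D^r + v_r D^r_h = 0$, which is exactly the parenthesised factor multiplying $\bar{C}^h_{ij}$ in (\ref{i}). Substituting this back into (\ref{i}) makes the $\bar{C}^h_{ij}(\cdots)$ term vanish identically, and what remains is precisely (\ref{o}).

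I anticipate no conceptual obstacle. The one slightly fiddly step is the term-by-term verification that $V^h_{ij}y^i = 0$, since $V^h_{ij}$ is a sum of several distinct tensor products rather than a single monomial, so one must check each summand against the vanishing identities $m_iy^i = h_{ij}y^i = h^h_i y^i = C_{imj}y^i = 0$; but this is mechanical checking against the definitions in Section 2, not a genuine difficulty.
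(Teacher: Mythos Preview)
Your proposal is correct and follows exactly the paper's own argument: transvect (\ref{i}) by $y^i$ to obtain (\ref{n}), then feed (\ref{n}) back into (\ref{i}) to kill the $\bar{C}^h_{ij}(\cdots)$ term. You have in fact supplied more detail than the paper does, since the paper simply asserts that transvection by $y^i$ yields (\ref{n}) without spelling out the vanishing of $V^h_{ij}y^i$, $C_{ril}y^i$, and $\bar{C}^h_{ij}y^i$ or the homogeneity identities for $D^r_i$ and $D^r_{ij}$.
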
 

As another important consequence of Theorem \ref{A}, we have the following:
\begin{corollary}\label{C}
If a vector field $v^i(x)$ is Killing in $F^n$ and $\bar{F}^n$, then the vector $v_i(x,y)$ is orthogonal to the vector $D^i(x,y)$.
\end{corollary}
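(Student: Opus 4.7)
The plan is to extract the desired orthogonality $v_r D^r = 0$ from equation (\ref{n}) of Corollary \ref{B} by a single further transvection with $y^j$, using only two elementary homogeneity facts. Assuming $v^i$ is Killing in both $F^n$ and $\bar{F}^n$, Corollary \ref{B} already supplies
$$2\, C_{r\,j\,l}\, v^l\, D^r + v_r\, D^r_j = 0.$$
I would simply multiply this identity by $y^j$ and analyse the two resulting terms separately.

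For the first term, the standard symmetry property $C_{r\,j\,l}\, y^j = 0$ of the Cartan tensor of $F^n$ causes the contribution to vanish outright. For the second term I need the value of $y^j D^r_j$. Since $G^i$ and $\bar{G}^i$ are spray coefficients of Finsler metrics, each is positively homogeneous of degree two in $y$, hence so is the difference $D^i = \bar{G}^i - G^i$. Euler's theorem then yields $y^j \dot\partial_j D^r = 2 D^r$, i.e.\ $y^j D^r_j = 2 D^r$. The same conclusion can be reached purely combinatorially from the two properties of the difference tensor recalled at the end of Section 2: $y^j D^r_j = y^j D^r_{j0} = D^r_{00} = 2 D^r$.

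Combining these observations, the transvected identity collapses to $2\, v_r\, D^r = 0$, so $v_r D^r = 0$. Since $v_r = g_{rl}\, v^l$, this equality is precisely the statement that $v_i(x,y)$ is orthogonal to $D^i(x,y)$ with respect to the fundamental metric tensor of $F^n$, which is exactly the content of Corollary \ref{C}. I do not foresee any substantive obstacle; the only point worth double-checking is the degree-two homogeneity of $D^i$, and this is immediate from its definition as a difference of spray coefficients.
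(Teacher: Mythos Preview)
Your proposal is correct and follows essentially the same route as the paper: both start from equation (\ref{n}) and transvect with $y^j$ to obtain $v_r D^r = 0$. You supply more explicit justification (the identity $C_{rjl}\,y^j = 0$ and the relation $y^j D^r_j = 2D^r$), which the paper leaves implicit; note only that (\ref{n}) is derived in the text just \emph{before} Corollary \ref{B} (whose conclusion is (\ref{o})), so your citation of Corollary \ref{B} as the source of (\ref{n}) is slightly off.
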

\begin{proof}
As $v^i(x)$ is Killing in $F^n$ and $\bar{F}^n$, (\ref{i}) holds, which on transvection by $y^i$ gives (\ref{n}). Again transvecting (\ref{n}) by $y^j$, it follows that $v_r\,D^r=0$. This proves the result. 
\end{proof}     

\section{Discussion and Conclusion}
~~~We proved Theorem \ref{A} as the main result and as its consequences we obtained Corollary \ref{B} and Corollary \ref{C}. Since the Killing equation (\ref{t}) is a necessary and sufficient condition for the transformation (\ref{g}) to be a motion in $F^n$ (\emph{vide} \cite{Ru}), the condition (\ref{i}) obtained in Theorem \ref{A} may be taken as the necessary and sufficient condition for the  vector field $v^i(x)$, generating a motion in $F^n$, to generate a motion in $\bar{F}^n$ as well. Also, since every motion is an affine motion and every affine motion is a projective motion (\emph{vide} \cite{Pa1}-\cite{Pa3}), it is clear that vector field $v^i(x)$, generating an affine motion (respectively projective motion) in $F^n$, generates an affine motion (respectively projective motion) in $\bar{F}^n$ if condition (\ref{i}) holds. The main result and its consequences, obtained in the paper, may be further utilized to link various transformations in $F^n$ with corresponding transformations in $\bar{F}^n$.

%Department of Mathematics\\
%University of Allahabad\\
%Allahabad, INDIA\\
%Email: shuklasureshk@gmail.com\\
%$~~~~~~~~~~~~$pnpiaps@gmail.com

\end{document}